\documentclass{amsart}
\usepackage{amsmath,amsfonts,amssymb}
\usepackage{graphicx}

\newtheorem{theorem}{Theorem}[section]
\newtheorem{lemma}[theorem]{Lemma}

\newtheorem{cor}[theorem]{Corollary}

\theoremstyle{definition}

\theoremstyle{remark}
\newtheorem{remark}[theorem]{Remark}

\numberwithin{equation}{section}

\begin{document}

\title[On the Constant and Extremal Function for Weighted Hardy Inequality in $L_p$]{On the Constant and Extremal Function for Weighted Hardy Inequality in $L_p$}
\thanks{This study is financed by the Bulgarian National Research Fund through Contract KP-06-N62/4}

\author[Ivan Gadjev]{Ivan Gadjev}
 \address{Department of Mathematics and Informatics,
Sofia University ``St. Kliment Ohridski'',
5 James Bourchier Blvd., 
1164 Sofia, Bulgaria}
\email{gadjev@fmi.uni-sofia.bg}

\subjclass[2010]{Primary  26D15; Secondary 26D10, 33D45}
      
\keywords{Hardy inequality, exact constant, extremal function, extremal sequence.}  

\begin{abstract}
  We study the behaviour of the smallest possible constant $d(a,b, p,\epsilon)$    in Hardy inequality
$$
\int_a^b\left(\frac{1}{x}\int_a^xf(t)dt\right)^px^{\epsilon}\,dx\leq
d(a,b,p,\epsilon)\,\int_a^b [f(x)]^px^{\epsilon}\, dx, \quad 2\le p<\infty.
$$
The exact rate of convergence of $d(a,b,p,\epsilon)$ is established and 
the ``almost extremal'' function is found.
\end{abstract}

\maketitle

\section{Introduction and statement of the results}

Between 1919 and 1925, in the series of papers  \cite {H1919, H1920, H1925} G. H. Hardy established the following inequality
\begin{equation}\label{eq01}
\int_0^\infty\left(\frac{1}{x}\int_0^xf(t)dt\right)^p\,dx\leq
\left(\frac{p}{p-1}\right)^{p}\,\int_0^\infty f^p(x)dx
\end{equation}
where $f$ is such that $f(x)\geq0$ for $x\in (0,\infty)$ and $f^p$ is integrable over $(0,\infty)$. 
The inequality \eqref{eq01} was firstly stated and proved in \cite{H1925}.  

The first weighted version of \eqref{eq01} appeared shortly after the original Hardy inequality. Let $p>1$ and $\epsilon<p-1$. Then for all
measurable functions $f(x)\geq 0$ the next inequality holds
\begin{equation*}\label{I2}
\int_0^\infty\left(\frac{1}{x}\int_0^xf(t)dt\right)^px^\epsilon\,dx\leq
\left(\frac{p}{p-1-\epsilon}\right)^{p}\,\int_0^\infty f^p(x)x^\epsilon\, dx.
\end{equation*}

Later it was extended to what is called the {\em general Hardy integral inequality}: 
\begin{equation}\label{I3}
\left(\int_a^b\left(\int_a^xf(t)dt\right)^q u(x)\,dx\right)^{1/q} \leq
d(a,b,p,q)\left(\int_a^b f^p(x)v(x)\,dx \right)^{1/p}
\end{equation}
where $-\infty\le a<b\le\infty$, $0<q\le\infty$, $1\le p\le\infty$ and $u(x),\,v(x)$ are given weight functions,
i.e. they are measurable and positive almost everywhere in $(a,b)$.

There are many papers investigating different aspects and applications of Hardy's inequalities;  
see for instance 
\cite{KP2003} and the bibliography of \cite{KMP2007}. In most of them
the authors are trying to determine the conditions on the parameters $p$, $q$ and on the weights $u$, $v$ under which the Hardy inequality \eqref{I3} holds for some classes of functions $f$.

In the present paper we investigate a different aspect of the "finite" version of Hardy's inequality  \eqref{I3}
in the special and very important case when $q=p$ and $u(x)=x^{\epsilon-p},\,v(x)=x^\epsilon$, $\epsilon<p-1$, i.e.
 \begin{equation}\label{wLp}
\int_a^b\left(\frac{1}{x}\int_a^xf(t)dt\right)^px^\epsilon\,dx\leq
d(a,b,p,\epsilon)\,\int_a^b f^p(x)x^\epsilon\, dx.
\end{equation}
We study the behaviour of the constant $d(a,b,p,\epsilon)$ in \eqref{wLp} and what the extremal function is.

The best results about $d(a,b,p,\epsilon)$ for $p>1$ could be summarised in the following way
(see, for instance \cite{KMP2007} or \cite{KP2003}).
Let
\[
B=\sup_{a<x<b} \left\{(x-a)^{p-1}\left(x^{1-p}-b^{1-p} \right)\right\}
\]
then for the constant $d(a,b,p,\epsilon)$ the next estimations are true
\[
\frac{1}{p-1-\epsilon}B\le d(a,b,p,\epsilon) \le \left(\frac{p}{p-1-\epsilon} \right)^p B.
\]
It is easy to see that only the right estimation gives asymptotically (when $a\rightarrow 0$ or $b\rightarrow \infty$ or both) the exact constant. But not the rate of convergence.

In \cite{DGM} we studied the inequality \eqref{wLp} for $p=2, \,\epsilon=0$ and established the exact constant $d(a,b,2,0)$ and the corresponding extremal function.

\begin{theorem}\cite{DGM}\label{th01}
Let  $a$ and $b$ be any fixed numbers with $0<a<b<\infty$. Then the inequality
\begin{equation}\label{MR01}
\int_a^b\left(\frac{1}{x}\int_a^xf(t)dt\right)^2 dx\leq
\frac{4}{1+4\alpha^2}\,\int_a^bf^2(x)\, dx,
\end{equation}
where $\alpha$ is the only solution of the equation
$$
\tan\left(\alpha\ln\frac{b}{a} \right)+2\alpha=0\ \ \mathrm{in\ the\ interval}\ \ \left(\frac{\pi}{2\ln\frac{b}{a}},\frac{\pi}{\ln\frac{b}{a}} \right),
$$
holds for every $f \in L^2[a,b]$.
Moreover, equality in \eqref{MR01} is attained for 
\[
f_{a,b}(x)=x^{-1/2}\left(2\alpha\cos\left(\alpha\ln \frac{x}{a}\right)+\sin\left(\alpha\ln  \frac{x}{a}\right) \right).
\]
\end{theorem}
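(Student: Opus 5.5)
The plan is to recast \eqref{MR01} as a one-dimensional variational problem and to solve it with a Picone-type identity built on the explicit Euler--Lagrange solution. Put $F(x)=\int_a^x f(t)\,dt$, so $F$ is absolutely continuous on $[a,b]$, $F(a)=0$ and $F'=f\in L^2$. Then \eqref{MR01} reads
\[
\int_a^b \frac{F^2(x)}{x^2}\,dx\le \mu\int_a^b (F'(x))^2\,dx,\qquad \mu=\frac{4}{1+4\alpha^2}.
\]
Formally, the extremal satisfies $\mu F''+F/x^2=0$ with $F(a)=0$ and the natural boundary condition $F'(b)=0$.

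This is an Euler equation. Trying $x^s$ gives $\mu s(s-1)+1=0$, so $s=\tfrac12\pm i\alpha$ with $\mu(\tfrac14+\alpha^2)=1$, which is exactly $\mu=4/(1+4\alpha^2)$. The condition $F(a)=0$ selects
\[
\varphi(x)=x^{1/2}\sin\bigl(\alpha\ln\tfrac{x}{a}\bigr).
\]
A direct computation gives
\[
\varphi'(x)=x^{-1/2}\Bigl(\tfrac12\sin\bigl(\alpha\ln\tfrac xa\bigr)+\alpha\cos\bigl(\alpha\ln\tfrac xa\bigr)\Bigr)=\tfrac12 f_{a,b}(x).
\]
Hence the condition $\varphi'(b)=0$ is precisely $\tan(\alpha\ln\frac ba)+2\alpha=0$.

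I will first check that this equation has exactly one root in the stated interval. With $L=\ln\frac ba$, the function $\tan(\alpha L)+2\alpha$ increases from $-\infty$ to $2\pi/L>0$ on $(\pi/(2L),\pi/L)$. This root is the smallest positive one, which corresponds to the largest $\mu$. Since $\alpha L<\pi$, $\varphi>0$ on $(a,b]$, and this positivity is what the argument needs.

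The key step is the identity. For $F$ as above, write $F=w\varphi$ on $(a,b]$ and use $\varphi''=-\varphi/(\mu x^2)$. Integrating by parts on $[a+\delta,b]$ gives
\[
\int_{a+\delta}^b (F')^2\,dx-\frac1\mu\int_{a+\delta}^b\frac{F^2}{x^2}\,dx=\int_{a+\delta}^b \varphi^2 (w')^2\,dx+\Bigl[F^2\frac{\varphi'}{\varphi}\Bigr]_{a+\delta}^{b}.
\]
Equivalently, the left side equals $\int (F'-F\varphi'/\varphi)^2\,dx$ plus the boundary term. At $x=b$ the boundary term vanishes because $\varphi'(b)=0$.

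The main obstacle is the endpoint $x=a$, where $\varphi'/\varphi$ blows up like $1/(x-a)$. Here Cauchy--Schwarz gives $F^2(x)\le (x-a)\int_a^x f^2$. Together with $\varphi'/\varphi\sim 1/(x-a)$, this shows the boundary term at $a+\delta$ tends to $0$ as $\delta\to0$. It follows that the left side is nonnegative, which is \eqref{MR01}.

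Equality holds iff $w'\equiv0$, i.e.\ $F=c\varphi$, i.e.\ $f=F'=c\,\varphi'=\tfrac c2 f_{a,b}$. This proves both the inequality and the attainment of equality at $f_{a,b}$. Note that \eqref{MR01} was stated for general $f\in L^2$; since $|\int f|\le\int|f|$, it suffices to treat $f\ge0$, and the argument above did not use the sign of $f$ anyway.
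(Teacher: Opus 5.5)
Your proof is correct and complete, but it follows a different route from this paper. The paper only quotes this theorem from its reference, so the relevant comparison is with its method for Theorem~\ref{th1}, specialised to $p=2$, $\epsilon=0$.

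You solve the Euler--Lagrange equation $\mu F''+F/x^2=0$ explicitly and feed the ground state $\varphi(x)=x^{1/2}\sin(\alpha\ln\frac{x}{a})$ into a Picone identity. This gives the exact remainder
\[
\mu\int_a^b f^2\,dx-\int_a^b\Bigl(\frac{F(x)}{x}\Bigr)^2dx=\mu\int_a^b\varphi^2\Bigl[\Bigl(\frac{F}{\varphi}\Bigr)'\Bigr]^2dx .
\]
From it you read off three things at once: the inequality, attainment at $f_{a,b}$, and uniqueness of the extremal up to a constant factor. The transcendental equation also appears naturally, as the natural boundary condition $\varphi'(b)=0$. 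The cost is the singularity of $\varphi'/\varphi$ at $x=a$, and your estimate $F^2(x)\le (x-a)\int_a^x f^2$ handles it correctly.

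The paper's machinery works differently. For the upper bound it applies Cauchy--Schwarz (H\"older) with an auxiliary weight $g$ and then Fubini, giving $d\le\max_t M(g,t)$, which is a Schur-test bound. For the lower bound it computes $\min_t M(t)$ for a test function. When $p=2$, $\epsilon=0$, take $g^2=f_{a,b}=2\varphi'$. Then $\int_t^b 2\varphi(x)x^{-2}\,dx=2\mu\varphi'(t)$ gives $M(g,t)\equiv 4/(1+4\alpha^2)$. The same computation gives $M(t)\equiv 4/(1+4\alpha^2)$ for $f=f_{a,b}$, so the two bounds coincide.

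What each route buys:
\begin{itemize}
\item The paper's route needs no integration by parts against a singular weight. More importantly, it only requires a one-sided (supersolution-type) inequality for $g$, not an exact solution. That is why it extends to $p>2$ and general $\epsilon$, where no explicit extremal is available and only two-sided estimates are obtained.
\item Your completion of squares is tied to the quadratic case. In exchange, it gives finer information that the Schur-test argument does not give directly: an explicit remainder term and a characterisation of all extremals.
\end{itemize}
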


This result was extended recently by F. Gesztesy and M. Pang \cite{GP} to the case of
additional power weights to the its multi-dimensional version on spherical shell domains.


 In this paper we investigate the behaviour of the constant $d(a,b,p,\epsilon)$  for $2< p<\infty$ and $\epsilon< p-1$ and  establish very sharp estimates for $d(a,b,p,\epsilon)$, as well as obtain an ``almost extremal'' function for \eqref{wLp}. Our main result is summarised in the following theorem.  

\begin{theorem}\label{th1}
Let $2\le p<\infty$, $0<a<b<\infty$,  $\epsilon< p-1$,  
$b_0=\exp\left[\frac{\sqrt2 \pi\,p(p-1)}{p-1-\epsilon} \right]$
and $\alpha$ is the only solution of the equation
\[
\tan\left(\alpha\ln \frac{b}{a} \right)+\frac{\alpha p}{p-1-\epsilon}=0\quad\mbox{in the interval}\quad
\left(\frac{\pi}{2\ln \frac{b}{a}},\frac{\pi}{\ln \frac{b}{a}} \right).
\]
 Then there exist positive constants $c_1=c_1(p,\epsilon)$ and  $c_2=c_2(p,\epsilon)$ 
depending only on $p$ and $\epsilon$, 
  such that  for every $0<a<b<\infty$ for which $b/a>b_0$ the next estimates
for the constant $d(a,b,p,\epsilon)$ in \eqref{wLp} hold 
\begin{equation}\label{MR1}
\left(\frac{p}{p-1-\epsilon} \right)^p  \left(1+\frac{c_1}{\ln^2 \frac{b}{a}}\right)^{-1}\!\!
\le d(a,b,p,\epsilon)
\le \left(\frac{p}{p-1-\epsilon} \right)^p  \left(1+\frac{c_2}{\ln^2 \frac{b}{a}}\right)^{-1}\!\!.
\end{equation}
Moreover, the function 
\begin{equation}\label{fun}
f^\ast(x)=x^{-(1+\epsilon)/p}\left(\frac{\alpha p}{p-1-\epsilon} \cos\left(\alpha\ln \frac{x}{a}\right)+\sin\left(\alpha\ln  \frac{x}{a}\right)\right)
\end{equation}
is an "almost extremal" function in the sense that 
\begin{equation*}
\int_a^b\left(\frac{1}{x}\int_a^x f^\ast(t)dt\right)^px^{\epsilon}dx
\geq \left(\frac{p}{p-1-\epsilon} \right)^p  \left(1+\frac{c_1}{\ln^2 \frac{b}{a}}\right)^{-1}\,\int_a^b [f^\ast(x)]^p \,x^{\epsilon}dx.
\end{equation*}
\end{theorem}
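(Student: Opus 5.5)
First I would normalize the problem with the substitution $x=a e^{s}$, $s\in[0,L]$, $L=\ln\frac ba$, and $f(x)=x^{-(1+\epsilon)/p}g(s)$. With $\gamma=\frac{p-1-\epsilon}{p}>0$ and $\beta=1-\frac{1+\epsilon}{p}=\gamma$, we have $\int_a^x f=a^{\gamma}\int_0^s e^{\gamma t}g(t)\,dt$. Both sides of \eqref{wLp} then carry the same power of $a$, so \eqref{wLp} becomes
\[
\int_0^L\Bigl(e^{-\gamma s}\int_0^s e^{\gamma t}g(t)\,dt\Bigr)^p ds\le d\int_0^L |g(s)|^p ds .
\]
This is a Volterra operator $T g(s)=\int_0^s e^{-\gamma(s-t)}g(t)\,dt$ on $L^p(0,L)$. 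Setting $u=Tg$ gives $u'+\gamma u=g$ and $u(0)=0$, so $d=\sup_u \|u\|_p^p/\|u'+\gamma u\|_p^p$ over $u$ with $u(0)=0$. Since $\gamma^{-p}=(p/(p-1-\epsilon))^p$, the claim \eqref{MR1} says exactly that
\[
\|u'+\gamma u\|_p^p\ge \gamma^p\Bigl(1+\tfrac{c}{L^2}\Bigr)\|u\|_p^p,
\]
with matching upper and lower behaviour in $L$. For $p=2$ this is the Sturm--Liouville problem of Theorem \ref{th01}, and $f^\ast$ corresponds to $g(s)=\frac{\alpha}{\gamma}\cos\alpha s+\sin\alpha s$, i.e. $u=\frac1\gamma\sin\alpha s$ up to normalization. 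Indeed, $u'+\gamma u=\frac{\alpha}{\gamma}\cos\alpha s+\sin\alpha s$, which is $g$ after scaling by $\gamma$. The equation $\tan(\alpha L)=-\alpha/\gamma$ is the natural boundary condition at $s=L$.

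\textbf{Lower bound (almost extremal function).} I would plug in $u=\sin(\alpha s)$, so that $g=\gamma\sin\alpha s+\alpha\cos\alpha s=\sqrt{\gamma^2+\alpha^2}\,\sin(\alpha s+\theta)$ with $\tan\theta=\alpha/\gamma$. The boundary condition gives $\alpha L+\theta=\pi$, so $\alpha s+\theta$ runs over $[\theta,\pi]$ while $\alpha s$ runs over $[0,\pi-\theta]$. Hence
\[
\frac{\|u\|_p^p}{\|g\|_p^p}=(\gamma^2+\alpha^2)^{-p/2}\,\frac{\int_0^{\pi-\theta}|\sin|^p}{\int_\theta^{\pi}|\sin|^p}=(\gamma^2+\alpha^2)^{-p/2},
\]
by the symmetry $\sin(\pi-t)=\sin t$. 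This equals $\gamma^{-p}(1+\alpha^2/\gamma^2)^{-p/2}$. Since $\alpha\in(\pi/(2L),\pi/L)$, we get $\alpha^2\asymp L^{-2}$, and $(1+x)^{-p/2}\ge(1+c_1x)^{-1}$ for bounded $x$. This yields the left inequality of \eqref{MR1} and the ``almost extremal'' statement for $f^\ast$. The hypothesis $b/a>b_0$ keeps $\alpha/\gamma$ small, so the constants depend only on $p,\epsilon$.

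\textbf{Upper bound (main obstacle).} The task is to show $\|u'+\gamma u\|_p^p\ge\gamma^p(1+c_2L^{-2})\|u\|_p^p$ for every $u$ with $u(0)=0$. I would first reduce to $u\ge0$: taking $g\ge0$ only increases $|u|$. Then I would expand pointwise. For $p\ge2$ one has the elementary inequality $|A+B|^p\ge|A|^p+p|A|^{p-2}AB+c_p|B|^p$; a sharper variant replaces the last term by $c_p|A|^{p-2}B^2$ when $|B|\lesssim|A|$. Apply it with $A=\gamma u$ and $B=u'$. The cross term integrates to $\gamma^{p-1}u(L)^p\ge0$. What remains is to show
\[
\int_0^L u^{p-2}u'^2 \ge \frac{c}{L^2}\int_0^L u^p .
\]
With $w=u^{p/2}$, the left side is $\frac{4}{p^2}\int w'^2$, and $w(0)=0$. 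A one-sided Poincaré inequality gives $\int_0^L w'^2\ge\frac{\pi^2}{4L^2}\int_0^L w^2$, which is exactly the needed bound. The delicate point is the region where $|u'|\gg u$, since there the $|A|^{p-2}B^2$ form of the inequality fails. There I would use the $c_p|B|^p$ term combined with Hölder's inequality, or split $[0,L]$ into the sets $\{|u'|\le Ku\}$ and its complement. On the complement, $|u'+\gamma u|^p\ge(1-\gamma/K)^p|u'|^p$ already dominates $\gamma^p u^p$ with room to spare once $K$ is large. Choosing $K$ and absorbing the errors is the hard step, and it is also where $p\ge2$ is used.
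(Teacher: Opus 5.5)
Your reformulation ($x=ae^{s}$, $u=Tg$, $g=u'+\gamma u$, $u(0)=0$) is correct, and so is your lower bound. The problem is the step you call the main obstacle. Your claim that the $|A|^{p-2}B^2$ form of the pointwise inequality fails where $|u'|\gg u$ is false, and as written the upper bound is left unfinished exactly there. For $p\ge2$ there is $\kappa_p>0$ with
\[
|A+B|^p\ge |A|^p+p|A|^{p-2}AB+\kappa_p|A|^{p-2}B^2\qquad\text{for all real }A,B,
\]
and it follows from your own two ingredients. If $|B|\le|A|$, the Taylor remainder $p(p-1)B^2\int_0^1(1-t)|A+tB|^{p-2}\,dt$ is at least $\tfrac{3}{8}\,p(p-1)2^{2-p}|A|^{p-2}B^2$, since $|A+tB|\ge|A|/2$ for $t\le\tfrac12$. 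If $|B|\ge|A|$, then $c_p|B|^p\ge c_p|A|^{p-2}B^2$ because $p\ge2$. With this, your chain closes at once: the cross term equals $\gamma^{p-1}u(L)^p\ge0$, then you pass to $w=u^{p/2}$ and use $\int_0^Lw'^2\ge\frac{\pi^2}{4L^2}\int_0^Lw^2$. This gives the right inequality for every $b/a$ with $c_2=\kappa_p\pi^2/(p-1-\epsilon)^2$, and no splitting is needed.

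The repairs you sketch would need care. Using the $c_p|B|^p$ term at the integrated level only gives $\int|u'|^p\gtrsim L^{-p}\int u^p$, which is the wrong rate for $p>2$. In the splitting, the set $\{|u'|>Ku\}$ must also absorb the cross term $p\gamma^{p-1}u^{p-1}u'$, which is nonnegative there since $u'\ge-\gamma u$. This is because only $\int_0^L u^{p-1}u'$ over the whole interval is a boundary term. So the splitting just reproves the pointwise inequality.

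With that fixed, your proof takes a genuinely different route from the paper's. For the upper bound, the paper stays in the $x$-variable and applies H\"older's inequality with an auxiliary weight (a Schur-type test), $d\le\max_t M(g,t)$. It must therefore guess $g(x)=x^{-(1+\epsilon)/(pq)}(\cos(\eta\ln x))^{1/q}$ and verify the differential inequality of Lemma~\ref{lem3}. Your convexity-plus-Poincar\'e argument needs no guessed weight. However, the constant it produces satisfies $c_2\gamma^2=\kappa_p\pi^2/p^2\le (p-1)\pi^2/p^2$ (testing $A=1$, $B=-1$ shows $\kappa_p\le p-1$), so it is $O(1/p)$. The paper's $c_2\gamma^2$ tends to $\tfrac14$ as $p\to\infty$.

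For the lower bound, the paper proves the pointwise bound $M(t)\ge\gamma^{-p}/(1+c\alpha^2)$ for $f^\ast$ (Lemma~\ref{lem4}) through a case analysis. Your symmetry observation ($\alpha L+\theta=\pi$ and $\sin(\pi-t)=\sin t$) instead evaluates the quotient of $f^\ast$ exactly as $\gamma^{-p}(1+\alpha^2/\gamma^2)^{-p/2}$. That gives the almost-extremal statement immediately and uses $b/a>b_0$ only to make $(1+x)^{p/2}\le 1+Cx$ uniform. It also yields a smaller $c_1$, about $\tfrac{p}{2}\pi^2/\gamma^2$, against the paper's $(p-\tfrac{7}{8})\pi^2/\gamma^2$ and $(2p^2-4p+1)\pi^2/\gamma^2$.
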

\begin{remark}
The right inequality in~\eqref{MR1} is true for any $0<a<b<\infty$.
\end{remark}

\begin{remark}
We prove that the inequalities in the equation~\ref{MR1} hold for the next constants $c_1$ and $c_2$:
\begin{equation*}\label{c1}
  c_1=
    \begin{cases}
          \pi^2\left(\frac{p}{p-1-\epsilon} \right)^2 \left(p-\frac{7}{8}\right), \quad\mbox{for}\quad 2\le p\le3\\
        \pi^2\left(\frac{p}{p-1-\epsilon} \right)^2   \left(2p^2-4p+1\right), \quad\mbox{for}\quad 3<p< \infty
        \end{cases}
 \end{equation*} 
 and
\begin{equation*}\label{c2}
c_2 =  \frac{p-1}{2}\left(\frac{p}{p-1-\epsilon}\arctan{\frac{1}{\sqrt{2(p-2)}}}\right)^2.
 \end{equation*} 
But these constants 
  are by no means the best ones. 
     They could be improved in a lot of ways but that could have made the proofs longer and more complicated.
Our goal was to establish the exact rate of convergence and to keep the proofs as simple as possible. 
\end{remark}
An immediate consequence of Theorem~\ref{th1} is the next corollary. 
\begin{cor}\label{cor1}
 When either of the limits relations $a\rightarrow0$,  $b\rightarrow\infty$, or both hold, i.e. $\ln(b/a)\rightarrow\infty$, then
\[
d(a,b,p,\epsilon) \sim 
\left(\frac{p}{p-1-\epsilon} \right)^p -\frac{C}{\ln^2 \frac{b}{a}}.
\]
More precisely, for every $0<a<b<\infty$ for which $b/a>b_0$ the next inequalities hold 
\begin{equation*}
\left(\frac{p}{p-1-\epsilon} \right)^p -\frac{C_1}{\ln^2 \frac{b}{a}} \le d(a,b,p,\epsilon) \le 
\left(\frac{p}{p-1-\epsilon} \right)^p -\frac{C_2}{\ln^2 \frac{b}{a}}.
\end{equation*}
where
\begin{equation*}
  C_1= 
    \begin{cases}
          \pi^2\left(\frac{p}{p-1-\epsilon} \right)^{p+2} \left(p-\frac{7}{8}\right), \quad\mbox{for}\quad 2\le p\le3\\
        \pi^2\left(\frac{p}{p-1-\epsilon} \right)^{p+2}  \left(2p^2-4p+1\right), \quad\mbox{for}\quad 3<p< \infty
        \end{cases}
 \end{equation*}
 and
\[
C_2 = \frac{2\pi^2(p-1)^2 \left(\frac{p}{p-1-\epsilon}\right)^{p+2}
\Big( \arctan{\frac{1}{\sqrt{2(p-2)}} }\Big)^2}{4\pi^2(p-1)+\Big( \arctan{\frac{1}{\sqrt{2(p-2)}} }\Big)^2}.
\]
\end{cor}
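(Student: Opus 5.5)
The corollary should follow from Theorem~\ref{th1} by elementary manipulation of the two bounds in \eqref{MR1}; no new analysis is needed. Throughout I write $K=\left(\frac{p}{p-1-\epsilon}\right)^p$, $q=\frac{p}{p-1-\epsilon}$, $L=\ln\frac{b}{a}$ and $A=\arctan\frac{1}{\sqrt{2(p-2)}}$, with $A=\pi/2$ when $p=2$. The hypothesis $b/a>b_0$ then reads
\[
L>L_0:=\sqrt2\,\pi\,\frac{p(p-1)}{p-1-\epsilon}=\sqrt2\,\pi(p-1)q .
\]

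\emph{Lower bound.} I will use the elementary inequality $(1+t)^{-1}\ge 1-t$ for $t\ge0$ with $t=c_1/L^2$. The left inequality in \eqref{MR1} then gives
\[
d(a,b,p,\epsilon)\ge K\left(1+\frac{c_1}{L^2}\right)^{-1}\ge K-\frac{Kc_1}{L^2}.
\]
Multiplying the constants $c_1$ from the Remark by $K=q^p$ turns the factor $q^2$ into $q^{p+2}$, so $Kc_1=C_1$ in both ranges $2\le p\le3$ and $p>3$.

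\emph{Upper bound.} I rewrite the right inequality in \eqref{MR1} as
\[
K\left(1+\frac{c_2}{L^2}\right)^{-1}=K-\frac{Kc_2}{L^2+c_2}.
\]
It then suffices to show that $\frac{Kc_2L^2}{L^2+c_2}\ge C_2$. The map $L^2\mapsto \frac{L^2}{L^2+c_2}$ is increasing, so for $L>L_0$ it is enough to verify this at $L=L_0$.

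With $c_2=\frac{p-1}{2}q^2A^2$ and $L_0^2=2\pi^2(p-1)^2q^2$, the factors $q^2$ and one factor $(p-1)$ cancel between numerator and denominator. What remains is
\[
\frac{Kc_2L_0^2}{L_0^2+c_2}
=\frac{q^{p+2}\,\pi^2(p-1)^2A^2}{2\pi^2(p-1)+A^2/2}
=\frac{2\pi^2(p-1)^2q^{p+2}A^2}{4\pi^2(p-1)+A^2},
\]
which is exactly $C_2$.

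The asymptotic statement $d(a,b,p,\epsilon)\sim K-C/L^2$ is then just a restatement of these two-sided bounds. The only points that need care are the monotonicity step, which is where the threshold $b_0$ enters, and the bookkeeping in the cancellation. I expect no real obstacle.
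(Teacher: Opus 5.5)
Your proposal is correct and follows the paper's argument. The lower bound uses $(1+t)^{-1}\ge 1-t$, which the paper calls obvious, and your monotonicity step evaluated at $L_0=\ln b_0$ is the paper's estimate $\left(1+\frac{c_2}{\ln^2 (b/a)}\right)^{-1}\le 1-c_2\left(1+\frac{c_2}{\ln^2 b_0}\right)^{-1}\frac{1}{\ln^2 (b/a)}$, with the same cancellation producing $C_2$ (your $q=\frac{p}{p-1-\epsilon}$ is the paper's $\beta$, not its $q=\frac{p}{p-1}$, but you define it explicitly, so no harm).
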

Indeed, the left inequality is obvious. For the right inequality  for 
$b/a>b_0$ we have
\[
 \left(1+\frac{c_2}{\ln^2 \frac{b}{a}}\right)^{-1}
 \le 1-c_2 \left(1+\frac{c_2}{\ln^2 b_0}\right)^{-1}
 \frac{1}{\ln^2 \frac{b}{a}}=1 -\frac{C_2^*}{\ln^2 \frac{b}{a}}
\]
where
\[
C_2^* = c_2 \left(1+\frac{c_2}{\ln^2 b_0}\right)^{-1}=
\frac{2\pi^2(p-1)^2 \left(\frac{p}{p-1-\epsilon}\right)^2
\Big( \arctan{\frac{1}{\sqrt{2(p-2)}} }\Big)^2}{4\pi^2(p-1)+\Big( \arctan{\frac{1}{\sqrt{2(p-2)}} }\Big)^2}.
\]




\begin{remark}
Probably, similar results are still true if $1<p<2$ but a different approach is needed.
\end{remark}

Henceforth, $\alpha$ will always be the only solution of the equation
\[
\tan\left(\alpha\ln \frac{b}{a} \right)+\frac{\alpha p}{p-1-\epsilon}=0
\]
in the interval $\left(\frac{\pi}{2\ln \frac{b}{a}},\frac{\pi}{\ln \frac{b}{a}} \right)$ and $\beta$ will always be $\beta =\frac{p}{p-1-\epsilon}$.

\section{Proof of the right inequality in theorem \ref{th1}}

By simple change of variables and notations it is easy to see that it is enough to prove \eqref{MR1} for the interval $(1,b)$.

Let $q=\frac{p}{p-1}$. From Holder's inequality we have for every two  functions $f(x)\geq0$ and $g(x)> 0, \,x\in(1,b)$ and $f^p(x)$
 and $g^q(x)$ are integrable over (1,b), 
\[
\left(\int_1^x f(t)dt \right)^p   \le\left(\int_1^x g^q(t)dt  \right)^{p-1}\left(\int_1^x \frac{f^p(t)}{g^p(t)}dt  \right).
\]
After multiplying both sides by $x^{\epsilon-p}$, integrating from 1 to $b$ and changing the order of integration in the right side 
we get
\begin{align*}
\int_1^b\left(\frac{1}{x}\int_1^x f(t)dt \right)^p x^\epsilon dx   \le 
\int_1^b\left[\frac{1}{g^p(t)t^\epsilon}\int_t^b\left(\int_1^x g^q(u)du \right)^{p-1}\frac{dx}{x^{p-\epsilon}}  \right]f^p(t)t^\epsilon dt.
\end{align*}
Let us denote for brevity $M(g,t)=g^{-p}(t)t^{-\epsilon}M^*(g,t)$ where
\[
M^*(g,t)=\int_t^b\left(\int_1^x g^q(u)du \right)^{p-1}\frac{dx}{x^{p-\epsilon}}.
\]
Then for every two functions $f(x)\geq 0$ and $g(x)> 0$, $1<x<b$ such that $f^p(x)$ and $g^q(x)$ are integrable the next upper estimation holds
\begin{align*}
\int_1^b\left(\frac{1}{x}\int_1^x f(t)dt \right)^px^\epsilon dx   \le 
\max_{1< t< b}M(g,t)  \int_1^b f^p(t) t^\epsilon dt
\end{align*}
and consequently for every function $g(x)> 0,\,1<x<b$
\[
d(1,b,p,\epsilon)\le \max_{1< t< b}M(g,t). 
\]
Now we want to minimize 
\[
\max_{1< t< b}M(g,t)
\]
 over all functions  $g(x)> 0$ on the interval $(1,b)$ or to find 
\[
\min_{g(x)> 0}\,\max_{1< t< b}\frac{1}{g^p(t)t^\epsilon}\int_t^b\left(\int_1^x g^q(u)du \right)^{p-1}\frac{dx}{x^{p-\epsilon}}.
\]
\begin{remark}
For $g(x)=x^{-(1+\epsilon)/(pq)}$ we obtain the original Hardy inequality. Indeed, we have
\[
\int_1^x g^q(u)du=\frac{p}{p-1-\epsilon}  \left(x^{1-(1+\epsilon)/p}-1\right)<\frac{p}{p-1-\epsilon}x^{1-(1+\epsilon)/p}
\]
\begin{align*}
\int_t^b\left(\int_1^x g^q(u)du \right)^{p/q}\frac{dx}{x^{p-\epsilon}}
<\left(\frac{p}{p-1-\epsilon}\right)^{p/q}
\int_t^b \left(x^{1-(1+\epsilon)/p} \right)^{p/q}\frac{dx}{x^{p-\epsilon}}\\
=\left(\frac{p}{p-1-\epsilon}\right)^{p}\left(t^{\epsilon-(1+\epsilon)/q}-b^{\epsilon-(1+\epsilon)/q} \right)
<\left(\frac{p}{p-1-\epsilon}\right)^{p}t^{\epsilon-(1+\epsilon)/q}\\
=\left(\frac{p}{p-1-\epsilon}\right)^{p} g^p(t)t^{\epsilon}
 \end{align*}
for every $1< t< b$. Consequently $M(g,t)<\left(\frac{p}{p-1-\epsilon}\right)^{p}$ for every $1< t< b$, which means that
\[
\max_{1< t< b}M(g,t)<\left(\frac{p}{p-1-\epsilon}\right)^{p} \quad\mbox{i.e.}\quad d(1,b,p,\epsilon)\le \left(\frac{p}{p-1-\epsilon}\right)^{p} .
\]
\end{remark}

But in order to prove the right inequality in Theorem \ref{th1} we need more complicated choice
of the function $g(x)$.
Let us consider the function $g(x)$, defined in the following way
\begin{equation*}\label{eq200}
g(x)=x^{-(1+\epsilon)/(pq)}\left(\cos(\eta\ln x) \right)^{1/q}
\end{equation*}
where $\eta=\frac{1}{\ln b}\arctan{\frac{1}{\sqrt{2(p-2)}}}$.

 Then we have
\begin{align*}
\int_1^xg^q(u)du&=\frac{\beta}{1+\eta^2\beta^2}\left[x^{1/\beta}(\cos(\eta\ln x)+\eta \beta\sin(\eta\ln x))-1 \right]\\
&<\frac{\beta}{1+\eta^2\beta^2}\left[x^{1/\beta}(\cos(\eta\ln x)+\eta \beta\sin(\eta\ln x))\right]\end{align*}
and for every $1< t< b$
\begin{equation}\label{eqI34}
M^*(g,t)<\beta^{p-1}\int_t^b\left(\frac{\cos(\eta\ln x)+\eta \beta\sin(\eta\ln x)}{1+\eta^2\beta^2} \right)^{p-1}
\frac{dx}{x^{1+1/\beta}}.
\end{equation}

Then by equation \eqref{eqI32} of Lemma \ref{lem3} proved below, there exists a constant $c_2^*= \frac{p-1}{2}\beta^2$, 
 such that for $1\le x\le b$
the next inequality holds:
\begin{equation}\label{eqI35}
\left(\frac{\cos(\eta\ln x)+\eta \beta\sin(\eta\ln x)}{1+\eta^2\beta^2} \right)^{p-1}\le
-\beta\left(1+c_2^*\eta^2 \right)^{-1}x^{1+1/\beta}\left[x^\epsilon g^p(x) \right]'.
\end{equation}

Then from  \eqref{eqI34} and  \eqref{eqI35} it follows that for every $1< t< b$
\begin{align*}
M^*(g,t)<-\frac{\beta^p}{1+c_2^*\eta^2}\int_t^b\left[x^\epsilon g^p(x) \right]'dx
\le \frac{\beta^p}{1+c_2^*\eta^2}t^\epsilon g^p(t)=\beta^p\left(1+\frac{c_2}{\ln^2b}\right)^{-1}t^\epsilon g^p(t)
\end{align*}
where $c_2=c_2^*\left(\arctan{\frac{1}{\sqrt{2(p-2)}}}\right)^2 = \frac{p-1}{2}\left(\beta\arctan{\frac{1}{\sqrt{2(p-2)}}}\right)^2$.
Consequently 
\[
M(g,t)\le  \beta^p\left(1+\frac{c_2}{\ln^2b}\right)^{-1}
=\left(\frac{p}{p-1-\epsilon} \right)^p  \left(1+\frac{c_2}{\ln^2 b}\right)^{-1}.
\]
The last means that 
\[
\max_{1< t< b}M(g,t)\le 
\left(\frac{p}{p-1-\epsilon} \right)^p  \left(1+\frac{c_2}{\ln^2 b}\right)^{-1}
\]
i.e.
\[
 d(1,b,p,\epsilon)\le \left(\frac{p}{p-1-\epsilon} \right)^p  \left(1+\frac{c_2}{\ln^2 b}\right)^{-1}.
\]

\begin{lemma} \label{lem3}
	Let $b>1$, $2< p<\infty$, $q=\frac{p}{p-1}$,  $\epsilon< p-1$, 
	$\eta=\frac{1}{\ln b}\arctan{\frac{1}{\sqrt{2(p-2)}}}$ and
\begin{equation*}\label{eql31}
g(x)=x^{-(1+\epsilon)/pq}\left(\cos(\eta\ln x) \right)^{1/q}.
\end{equation*}
	Then 
	there exists a constant $c=c(p,\epsilon)$, depending only on $p$ and $\epsilon$ such that
	 for $1\le x\le b$  the next inequality holds:
\begin{equation}\label{eqI32}
\left(\frac{\cos(\eta\ln x)+\eta \beta\sin(\eta\ln x)}{1+\eta^2\beta^2} \right)^{p-1}\le
-\beta \left(1+c\eta^2 \right)^{-1}x^{1+1/\beta}\left[x^\epsilon g^p(x) \right]'.
\end{equation}
	\end{lemma}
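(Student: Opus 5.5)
The plan is to reduce \eqref{eqI32} to an elementary two-variable inequality and to prove it with the explicit constant $c=\frac{p-1}{2}\beta^2$ announced above.

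\emph{Step 1: compute the right-hand side.} Since $x^\epsilon g^p(x)=x^{\epsilon-(1+\epsilon)(p-1)/p}\cos^{p-1}(\eta\ln x)$ and $\epsilon-(1+\epsilon)(p-1)/p=-(p-1-\epsilon)/p=-1/\beta$, we have $x^\epsilon g^p(x)=x^{-1/\beta}\cos^{p-1}\theta$ with $\theta=\eta\ln x$. Differentiating gives
\[
-\beta x^{1+1/\beta}\left[x^\epsilon g^p(x)\right]'=\cos^{p-2}\theta\,\bigl(\cos\theta+(p-1)\eta\beta\sin\theta\bigr).
\]
For $1\le x\le b$ we have $\theta\in[0,\theta_0]$, where $\theta_0=\arctan\frac{1}{\sqrt{2(p-2)}}$, so $\cos\theta>0$. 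Dividing by $\cos^{p-1}\theta$, we set $t=\tan\theta\in[0,T]$ with $T^2=\frac{1}{2(p-2)}$, and $s=\eta\beta>0$. With $c\eta^2=\frac{p-1}{2}s^2$, \eqref{eqI32} becomes
\[
\Phi(t,s):=(p-1)\ln(1+s^2)+\ln\bigl(1+(p-1)st\bigr)-(p-1)\ln(1+st)-\ln\Bigl(1+\tfrac{p-1}{2}s^2\Bigr)\ge 0 .
\]
Since $b>1$ is arbitrary, $s$ is not necessarily small, so this must hold for all $s>0$ and $t\in[0,T]$.

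\emph{Step 2: reduce to $t=T$.} We have
\[
\partial_t\Phi=\frac{(p-1)s}{1+(p-1)st}-\frac{(p-1)s}{1+st}\le 0
\]
because $p\ge 2$. Hence $\Phi(\cdot,s)$ is nonincreasing on $[0,T]$, and it suffices to prove $\Phi(T,s)\ge0$. (At $t=0$ the inequality is just $(1+s^2)^{p-1}\ge 1+(p-1)s^2\ge 1+\frac{p-1}{2}s^2$.)

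\emph{Step 3: the endpoint inequality, which is the main obstacle.} The choice of $T$ is exactly what makes the quadratic terms favourable. Expanding $\Phi(T,s)$ for small $s$, the linear terms $(p-1)sT$ cancel, and the $s^2$ coefficient is
\[
(p-1)\Bigl[1-\tfrac{(p-1)T^2}{2}+\tfrac{T^2}{2}-\tfrac12\Bigr]=(p-1)\Bigl[\tfrac12-\tfrac{(p-2)T^2}{2}\Bigr]=\tfrac{p-1}{4}>0 .
\]
For large $s$ the inequality is also clear, since $(1+s^2)^{p-1}(1+(p-1)sT)\sim s^{2p-1}$ dominates $(1+sT)^{p-1}(1+\frac{p-1}{2}s^2)\sim s^{p+1}$.

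To get a uniform argument I would first try to show $\partial_s\Phi(T,s)\ge0$ for all $s>0$, which together with $\Phi(T,0)=0$ finishes the proof. Clearing denominators in
\[
\partial_s\Phi(T,s)=\frac{2(p-1)s}{1+s^2}+\frac{(p-1)T}{1+(p-1)sT}-\frac{(p-1)T}{1+sT}-\frac{(p-1)s}{1+\frac{p-1}{2}s^2}
\]
reduces this to a polynomial inequality in $s$. Its low-order coefficient is positive by the computation above, and I would check the remaining coefficients using $T^2=\frac1{2(p-2)}$. If that polynomial is not manifestly nonnegative, the fallback is to split into $s\le s_1$ and $s\ge s_1$. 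For $s\le s_1$ one uses $\ln(1+u)\ge u-u^2/2$ and $\ln(1+u)\le u$ with explicit remainder control. For $s\ge s_1$ one uses the crude bounds $(1+s^2)^{p-1}\ge(1+sT)^{p-1}(1+s^2)^{(p-1)/2}$, valid since $T\le 1$ when $p\ge 5/2$, and then compares $(1+s^2)^{(p-1)/2}(1+(p-1)sT)$ with $1+\frac{p-1}2s^2$. The range $2<p<5/2$, where $T>1$, will need separate care.
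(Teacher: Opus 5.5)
Your Steps 1 and 2 are correct. The formula $-\beta x^{1+1/\beta}[x^\epsilon g^p]'=\cos^{p-2}\theta\,(\cos\theta+(p-1)\eta\beta\sin\theta)$ is right. So is the reduction to $\Phi(t,s)\ge0$ with $s=\eta\beta$ and $c\eta^2=\frac{p-1}{2}s^2$. The observation $\partial_t\Phi\le0$, which reduces everything to $t=T$, is a neat simplification the paper does not use. The gap is Step 3, which carries the entire content of the lemma and is not proved. The positive $s^2$-coefficient $\frac{p-1}{4}$ and the comparison of $s^{2p-1}$ with $s^{p+1}$ give $\Phi(T,s)\ge0$ only below some unspecified small threshold and above some unspecified large one. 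As you yourself note, every $s>0$ must be covered, since $b>1$ is arbitrary. The uniform argument is only announced. The fallback is explicitly unfinished: it has no thresholds, no remainder estimates, and it leaves the range $2<p<5/2$ open. As written, the proposal does not establish \eqref{eqI32}.

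Your first plan does close the gap in a few lines. Using $(p-2)T^2=\frac12$, the two middle terms of $\partial_s\Phi(T,s)$ combine to $-\frac{(p-1)s}{2(1+(p-1)sT)(1+sT)}$. The two outer terms combine to $\frac{(p-1)s\,(1+(p-2)s^2)}{(1+s^2)(1+\frac{p-1}{2}s^2)}$. So $\partial_s\Phi(T,s)\ge0$ is equivalent to
\[
2\bigl(1+(p-2)s^2\bigr)\Bigl(1+psT+\tfrac{p-1}{2(p-2)}s^2\Bigr)\ge 1+\tfrac{p+1}{2}s^2+\tfrac{p-1}{2}s^4 .
\]
This holds coefficientwise. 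On the left, the constant term is $2$, the $s^4$-coefficient is $p-1$, and the odd terms are nonnegative. The $s^2$-coefficient is $\frac{p-1}{p-2}+2(p-2)\ge\frac{p+1}{2}$: the first summand suffices for $p\le3$, the second for $p\ge3$. Together with $\Phi(T,0)=0$ this finishes your route.

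The paper argues differently. It linearizes the $(p-1)$-st power via $(1+x)^{\gamma}\le\frac{1+x}{1+x-\gamma x}$ for $-1<x<\frac{1}{\gamma-1}$, checking the range condition by AM--GM. This turns the claim into a quadratic inequality in $y=\tan(\eta\ln x)$. Its $y^2$-term is absorbed exactly by $y^2\le\frac{1}{2(p-2)}$ with $c=\frac{p-1}{2}\beta^2$.
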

\begin{remark}
We prove that the inequality~\eqref{eqI32} is true for the constant $c= (p-1)\beta^2/2$.
\end{remark}	
	\begin{proof}
We have
\begin{align*}
\left[x^\epsilon g^p(x) \right]'=-\frac{(\cos(\eta\ln x))^{p-2}}{x^{1+1/\beta}}
\left(\eta (p-1)\sin(\eta\ln x)+\frac{1}{\beta}\cos(\eta\ln x) \right).
\end{align*}
Then the above inequality \eqref{eqI32} is equivalent to
\[
\left(\frac{1+\eta \beta y}{1+\eta^2\beta^2} \right)^{p-1}
=\left(1+\frac{\eta \beta (y-\eta \beta )}{1+\eta^2\beta^2} \right)^{p-1}
\le\frac{1}{1+c\eta^2}\left( 1+\eta\beta (p-1)y\right)
\]
where $y=\tan(\eta\ln x)$. We will apply to the left side the next easily verifiable inequality:
\begin{equation}\label{eql33}
(1+x)^\gamma<\frac{1+x}{1+x-\gamma x}, \quad \gamma>1, \quad -1<x<\frac{1}{\gamma-1}.
\end{equation}
In order to do that we need
\[
-1<\frac{\eta \beta (y-\eta \beta )}{1+\eta^2\beta^2}<\frac{1}{p-2}
\]
which is equivalent to
\[
0<\frac{1+\eta \beta y}{1+\eta^2\beta^2}<\frac{p-1}{p-2}.
\]
Left inequality is obvious and the right inequality is equivalent to
\[
y<\frac{1+(p-1)\eta^2 \beta^2}{(p-2)\eta\beta}.
\]
From $\eta=\frac{1}{\ln b}\arctan{\frac{1}{\sqrt{2(p-2)}}}$ we have 
\[
y=\tan(\eta\ln x)<\frac{1}{\sqrt{2(p-2)}}<\frac{2\sqrt{p-1}}{p-2}
=\frac{2\eta \beta \sqrt{p-1}}{\eta\beta(p-2)}<\frac{1+(p-1)\eta^2 \beta^2}{(p-2)\eta\beta}.
\]
Applying \eqref{eql33} for $x=\frac{\eta \beta (y-\eta \beta )}{1+\eta^2\beta^2}$ and $\gamma=p-1$ we get
\[
\left(\frac{1+\eta \beta y}{1+\eta^2\beta^2} \right)^{p-1}
<\frac{1+\eta \beta y}{1-(p-2)\eta \beta y+(p-1)\eta^2\beta^2}.
\]
Now we need to prove that there is a constant $c>0$ such that
\[
\frac{1+\eta \beta y}{1-(p-2)\eta \beta y+(p-1)\eta^2\beta^2}
\le\frac{1}{1+c\eta^2}\left( 1+\eta\beta (p-1)y\right).
\]
After some simplifications this inequality is equivalent to
\[
(p-1)\beta^2-c +\left((p-1)^2\beta^2-c \right)\eta\beta y
\geq (p-1)(p-2)\beta^2y^2.
\]
But since $y<\frac{1}{\sqrt{2(p-2)}}$ i.e. $(p-1)(p-2)\beta^2y^2\le \frac{p-1}{2}\beta^2$ it is enough  to prove that there is a constant $c>0$ such that
\[
\frac{p-1}{2}\beta^2-c +\left((p-1)^2\beta^2-c \right)\eta\beta y\geq 0.
\]
This inequality is true, for instance, if $c= \frac{p-1}{2}\beta^2$. The lemma is proved.

\end{proof}

\section{Proof of the left inequality in theorem \ref{th1} }
By changing the order of integration, we write the left side of \eqref{wLp} for
 $a=1$ and $f(x)>0,\,\,1<x<b$ in the following way
\begin{align*}
\int_1^b\left(\frac{1}{x}\int_1^xf(t)dt\right)^p \!x^\epsilon\,dx=\int_1^b M(t)f^p(t)t^\epsilon\,dt
\end{align*}
where
\begin{align*}
M(t)=\frac{1}{t^\epsilon f^{p-1}(t)}\int_t^b\left(\int_1^x f(u)du \right)^{p-1}\frac{dx}{x^{p-\epsilon}}. 
\end{align*}
Obviously
\[
d(1,b,p,\epsilon)\geq \min_{1< t< b}M(t).
\]
Then for the function $f^*(x)$ defined in \eqref{fun} for $a=1$ we have
\[
\int_1^x f^*(u)du=\beta x^{1/\beta}\sin(\alpha\ln x)
\]
and
\begin{align*}\label{eqI7}
\int_t^b\left(\int_1^x f^*(u)du \right)^{p-1}\frac{dx}{x^{p-\epsilon}}
=\beta^{p-1}\int_t^b (\sin(\alpha\ln x))^{p-1}\frac{dx}{x^{1+1/\beta}}.
\end{align*}
\begin{remark}
	The function $f^*$ defined by \eqref{fun} is well defined and $f^*(x)>0$. Indeed, if 
	$\alpha\ln x\in \left(0,\frac{\pi}{2}\right]$ it is obvious. If 
	$\alpha\ln x\in \left(\frac{\pi}{2},\pi \right)$ since the function
	$h(x)=\alpha \beta\cos(\alpha\ln x)+\sin(\alpha\ln x)$ 
	is decreasing and $h(b)=0$ it follows that $h(x)>h(b)=0$, i.e. $h(x)>0$ for $1\le x\le b$ and
	 consequently $f^*(x)>0$.
	\end{remark}

Then  from Lemma \ref{lem4} proved below 
 it follows that  there exists a constant $c_1^*$:
  \begin{equation*}
  c_1^*= 
    \begin{cases}
           \beta^2\left(p-\frac{7}{8}\right), \quad\mbox{for}\quad 2\le p\le3\\
         \beta^2 \left(2p^2-4p+1\right), \quad\mbox{for}\quad 3<p< \infty
        \end{cases}
 \end{equation*}
  such that for every $b>b_0$ where  $ b_0=e^{\sqrt2 \pi  \beta (p-1)}$
 \begin{align*}
\int_t^b\left(\int_1^x\! f^*(u)du \right)^{p-1}\!\!\!\frac{dx}{x^{p-\epsilon}}
\geq- \frac{\beta^p } {1+c_1^*\alpha^2}\int_t^b\!\left[x^\epsilon(f^*(x))^{p-1} \right]'dx
=\frac{\beta^p\, t^\epsilon[ f^*(t)]^{p-1}} {1+c_1^*\alpha^2}.
\end{align*}
Consequently 
\begin{align*}
\min_{1< t< b}M(t)\geq \frac{\beta^p} {1+c_1^*\alpha^2}\quad\mbox{i.e.}\quad
d(1,b,p,\epsilon)
\geq  \frac{\beta^p} {1+c_1^*\alpha^2}\geq  \beta^p\left(1+\frac{c_1}{\ln^2 \frac{b}{a}}\right)^{-1}
\end{align*}
where 
\begin{equation*}
  c_1=\pi^2c_1^*= 
    \begin{cases}
          \pi^2  \beta^2\left(p-\frac{7}{8}\right), \quad\mbox{for}\quad 2\le p\le3\\
        \pi^2 \beta^2 \left(2p^2-4p+1\right), \quad\mbox{for}\quad 3<p< \infty.
        \end{cases}
 \end{equation*}
 The left inequality in Theorem \ref{th1} is proved.

\begin{lemma} \label{lem4}
	Let  $p\geq 2$, $\epsilon<p-1$, 
	 $ b_0=e^{\sqrt2 \pi  \beta (p-1)}$ and the function $f^*$ be defined by \eqref{fun}. Then 
 there exists a constant $c=c(p,\epsilon)>0$, depending only on $p$ and $\epsilon$ such that 
  for every $b>b_0$ the next inequality holds 
  \begin{align}\label{eq41}
(\sin(\alpha\ln x))^{p-1}\geq 
- \frac{\beta x^{1+1/\beta}\left[x^\epsilon (f^*(x))^{p-1} \right]'} {1+c\alpha^2}.
\end{align}
\end{lemma}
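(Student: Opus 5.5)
The plan is to compute the derivative on the right of \eqref{eq41} explicitly, reduce the inequality to a one-variable inequality in $z=\alpha\beta\cot(\alpha\ln x)$, and maximize that function exactly. Write $h(x)=\alpha\beta\cos(\alpha\ln x)+\sin(\alpha\ln x)$, so that $f^*(x)=x^{-(1+\epsilon)/p}h(x)$ with $a=1$. Since $\epsilon-(1+\epsilon)(p-1)/p=-(p-1-\epsilon)/p=-1/\beta$, we get
\[
x^\epsilon (f^*(x))^{p-1}=x^{-1/\beta}h^{p-1}(x).
\]
Differentiating, and using $xh'(x)=\alpha\cos(\alpha\ln x)-\alpha^2\beta\sin(\alpha\ln x)$, a short computation gives
\[
-\beta x^{1+1/\beta}\left[x^\epsilon (f^*(x))^{p-1}\right]'=h^{p-2}(x)\Big(\big(1+(p-1)\alpha^2\beta^2\big)\sin(\alpha\ln x)-(p-2)\alpha\beta\cos(\alpha\ln x)\Big).
\]
So \eqref{eq41} is equivalent to
\[
(1+c\alpha^2)\,s^{p-1}\ge h^{p-2}\big(Ks-(p-2)\alpha\beta\,\cos(\alpha\ln x)\big),
\]
where $s=\sin(\alpha\ln x)$ and $K=1+(p-1)\alpha^2\beta^2$.

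At $x=1$ we have $s=0$ and the right side is $-(p-2)\alpha^{p-1}\beta^{p-1}\le0$, so the inequality holds there. For $1<x\le b$ we have $\alpha\ln x\in(0,\pi)$, so $s>0$, and we divide by $s^{p-1}$. Put $z=\alpha\beta\cot(\alpha\ln x)$. Then $h/s=1+z$, and $1+z>0$ by the Remark showing that $f^*>0$. The inequality becomes
\[
F(z):=(1+z)^{p-2}\big(K-(p-2)z\big)\le 1+c\alpha^2 .
\]
I intend to bound $F$ by its global maximum over $z>-1$, ignoring the finer range of $z$. For $p=2$ this is immediate, since $F\equiv K=1+\alpha^2\beta^2$ and $c=\beta^2$ works. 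For $p>2$, setting $F'(z)=0$ gives $K-(p-2)z=1+z$, that is $z^*=(K-1)/(p-1)=\alpha^2\beta^2$. The function $F$ increases on $(-1,z^*)$ and decreases afterwards, and where the second factor is negative the bound is trivial. Hence
\[
\max_{z>-1} F=(1+\alpha^2\beta^2)^{p-2}\big(1+\alpha^2\beta^2\big)=(1+\alpha^2\beta^2)^{p-1}.
\]

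It then remains to show $(1+\alpha^2\beta^2)^{p-1}\le 1+c\alpha^2$ with $c$ depending only on $p$ and $\epsilon$. This is where the hypothesis $b>b_0$ enters. We have $\alpha<\pi/\ln b<\pi/\ln b_0=1/(\sqrt2\beta(p-1))$, so $t:=\alpha^2\beta^2\le t_0:=1/(2(p-1)^2)$. The map $t\mapsto(1+t)^{p-1}$ is convex, so on $[0,t_0]$ it lies below its chord: $(1+t)^{p-1}\le 1+\frac{(1+t_0)^{p-1}-1}{t_0}\,t$. This gives
\[
c=\beta^2\,\frac{(1+t_0)^{p-1}-1}{t_0},
\]
which depends only on $p$ and $\epsilon$. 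One can then bound this $c$ by simpler explicit expressions, such as the $\beta^2(p-\tfrac78)$ and $\beta^2(2p^2-4p+1)$ quoted earlier, using elementary estimates of $(1+t_0)^{p-1}$.

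I expect the main obstacle to be organizing the derivative computation so that it collapses to the clean form $F(z)$ in the single variable $z$. Once that is done, the exact maximization and the use of $b>b_0$ to keep $\alpha\beta$ small are routine. If one insists on the paper's particular constants, matching them may need some case-by-case tuning in $p$, but any constant produced by this scheme proves the lemma as stated.
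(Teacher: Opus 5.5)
Your proof is correct, and it reaches the lemma by a different and cleaner route than the paper. The reduction is the same in both: the identity
$-\beta x^{1+1/\beta}[x^\epsilon (f^*)^{p-1}]'=h^{p-2}\big((1+(p-1)\alpha^2\beta^2)\sin(\alpha\ln x)-(p-2)\alpha\beta\cos(\alpha\ln x)\big)$.

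\textbf{The paper's route.} It works with $y=\tan$ of the angle. It first proves an auxiliary inequality with a constant $c^*$ and only then converts it to $c$. Along the way it splits into cases:
\begin{itemize}
\item $\cos(\alpha\ln x)\le 0$, handled by Bernoulli's inequality;
\item $\cos(\alpha\ln x)>0$ with $2\le p\le 3$, handled by the concavity of $u\mapsto(1+u)^{p-2}$ plus AM--GM;
\item $\cos(\alpha\ln x)>0$ with $p>3$, handled by Bernoulli's inequality again.
\end{itemize}
Each step uses one-sided elementary bounds.

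\textbf{Your route.} The substitution $z=\alpha\beta\cot(\alpha\ln x)$ treats both signs of the cosine at once. Exact maximization of $F(z)=(1+z)^{p-2}(K-(p-2)z)$ then gives $\max F=(1+\alpha^2\beta^2)^{p-1}$. The maximizer $z^*=\alpha^2\beta^2$ is attained at some $x\in(1,b)$, since $\alpha\ln b>\pi/2$. So for the given $b$, the condition $(1+\alpha^2\beta^2)^{p-1}\le 1+c\alpha^2$ is necessary as well as sufficient for \eqref{eq41}, and your reduction loses nothing.

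\textbf{What each approach buys.} Yours treats all $p$ uniformly and gives a better constant. Your $c=\beta^2\big((1+t_0)^{p-1}-1\big)/t_0$ never exceeds the paper's; the two coincide at $p=3$, both equal to $\tfrac{17}{8}\beta^2$. It is also at most $\beta^2(p-\tfrac12)$, whereas the paper's $\beta^2(2p^2-4p+1)$ is quadratic in $p$. Through $c_1=\pi^2c_1^*$ this improvement carries over to Theorem~\ref{th1}. The paper's route avoids calculus, but it pays with the case analysis and the worse dependence on $p$.

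\textbf{One small slip.} The claim $1+z>0$ fails at $x=b$: there $h(b)=0$, so $z=-1$. This is harmless. For $p>2$ you have $F(-1)=0$, and for $p=2$ you have $F\equiv K$. Simply work on $z\ge -1$ instead.
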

\begin{remark}
We prove that the inequality~\ref{eq41} is true for the constant
 \begin{equation*}
  c= 
    \begin{cases}
           \beta^2\left(p-\frac{7}{8}\right), \quad\mbox{for}\quad 2\le p\le3\\
         \beta^2 \left(2p^2-4p+1\right), \quad\mbox{for}\quad 3<p< \infty.
        \end{cases}
 \end{equation*}
  \end{remark}	
		
\begin{proof}
Since
 \begin{align*}
&\beta x^{1+1/\beta}\left[x^\epsilon (f^*(x))^{p-1} \right]'\\
&=(\alpha \beta\cos(\alpha\ln x)+\sin(\alpha\ln x))^{p-2} \big(\alpha\beta(p-2)\cos(\alpha\ln x)-\left(1+\alpha^2\beta^2(p-1) \right)\sin(\alpha\ln x) \big)
\end{align*}
 we need to prove that there exists a constant $c$ such that   the next inequality is true
\begin{align}\label{eqI66}
\notag&\frac{1+c\alpha^2}{1+\alpha^2\beta^2( p-1)} (\sin(\alpha\ln x))^{p-1}\\ 
&\geq  (\alpha \beta\cos(\alpha\ln x)+\sin(\alpha\ln x))^{p-2}
\left(\sin(\alpha\ln x) -\frac{\alpha \beta(p-2)}{1+\alpha^2\beta^2( p-1)}\cos(\alpha\ln x)\right).
  \end{align}
  Firstly we will prove that there is a constant $c^*>0$ such that 
   \begin{align}\label{eqI6}
\notag&\left(1+c^*\alpha^2 \right) (\sin(\alpha\ln x))^{p-1}\\ 
&\geq  (\alpha \beta\cos(\alpha\ln x)+\sin(\alpha\ln x))^{p-2}
\left(\sin(\alpha\ln x) -\frac{\alpha \beta(p-2)}{1+\alpha^2\beta^2( p-1)}\cos(\alpha\ln x)\right).
  \end{align}
	We consider two cases.

	\textbf{Case 1.} $\alpha\ln x\in \left[\frac{\pi}{2},\pi \right)$.\\	
	Let $\alpha\ln x=\pi-\phi,\, 0<\phi\le\pi/2$. Then\eqref{eqI6} is equivalent to 
	\begin{align*}
\left(1+c^*\alpha^2 \right)&(\sin\phi)^{p-1}\\
&\geq(-\alpha \beta\cos(\phi)+\sin\phi)^{p-2}
\left(\sin\phi +\frac{\alpha \beta(p-2)}{1+\alpha^2\beta^2( p-1)}\cos\phi\right)\end{align*}
	or
	\begin{align*}
\left(1+c^*\alpha^2 \right)y^{p-1}\geq \left(y-\alpha \beta \right)^{p-2}
\left(y+\frac{\alpha\beta(p-2)}{1+\alpha^2\beta^2(p-1)} \right)
\end{align*}
where $y=\tan\phi$. Since $y>\alpha \beta$ in this case, we have from Bernoulli's inequality
\begin{align*}
\frac{y^{p-1}}{\left(y-\alpha \beta \right)^{p-2}}
=\left(y-\alpha \beta \right)\left(\frac{y}{y-\alpha \beta} \right)^{p-1}
=\left(y-\alpha \beta \right)\left(1+\frac{\alpha\beta}{y-\alpha \beta} \right)^{p-1}\\
\geq \left(y-\alpha \beta \right)\left(1+\frac{\alpha\beta(p-1)}{y-\alpha \beta} \right)
=y+\alpha\beta(p-2)>y+\frac{\alpha\beta(p-2)}{1+\alpha^2\beta^2(p-1)}.
\end{align*}

	\textbf{Case 2.} $\alpha\ln x\in \left[0,\frac{\pi}{2} \right)$.\\
		In this case \eqref{eqI6} is equivalent to
	\begin{align}\label{I61}
\left(1+c^*\alpha^2 \right)y^{p-1}\geq \left(y+\alpha \beta \right)^{p-2}
\left(y-\frac{\alpha\beta(p-2)}{1+\alpha^2\beta^2(p-1)} \right)
\end{align}
	where $y=\tan(\alpha\ln x)$.
	If $	y\le \frac{\alpha\beta (p-2)}{1+\alpha^2\beta^2(p-1)}$ then
	\eqref{I61} is obvious.
	Let
	$y> \frac{\alpha\beta (p-2)}{1+\alpha^2\beta^2(p-1)}$.
Also we have $\alpha\le \pi/\ln b$ and for every $b>b_0$:   
$\ln b> \ln b_0=\sqrt 2 \pi  \beta (p-1)$
and consequently
\begin{equation}\label{eq3.2-2}
\alpha^2\beta^2(p-1)<\frac{1}{2(p-1)}.
\end{equation}
		
	\textbf{Case 2.1.} $2\le p\le 3$\\
	We need to prove that there is a constant $c^*>0$ such that
     \begin{align*}
1+c^*\alpha^2\geq \left(1+\frac{\alpha \beta}{y} \right)^{p-2} 
\left(1-\frac{\alpha\beta(p-2)}{1+\alpha^2\beta^2(p-1)}\frac{1}{y} \right).
\end{align*}	
	Since
	\[
	\left(1+\frac{\alpha \beta}{y} \right)^{p-2} \le 1+\frac{\alpha\beta(p-2)}{y}
	\]
	it is enough to prove that 
	\begin{align*}
1+c^*\alpha^2\geq \left(1+\frac{\alpha\beta(p-2)}{y}\right)
\left(1-\frac{\alpha\beta(p-2)}{1+\alpha^2\beta^2(p-1)}\frac{1}{y} \right)
\end{align*}
	or simplifying it
	\begin{align*}
\frac{c^*y}{p-2}+\frac{\beta^2(p-2)}{1+\alpha^2\beta^2(p-1)}\frac{1}{y}
\geq \frac{\alpha \beta^3(p-1)}{1+\alpha^2\beta^2(p-1)}.
	\end{align*}
	But since 
	\begin{align*}
\frac{c^*y}{p-2}+\frac{\beta^2(p-2)}{1+\alpha^2\beta^2(p-1)}\frac{1}{y}
\geq 2\beta\sqrt{\frac{c^*}{1+\alpha^2\beta^2(p-1)}}     
	\end{align*}
it is enough to prove that there is a constant $c^*$ such that
\begin{align*}
 2\beta\sqrt{\frac{c^*}{1+\alpha^2\beta^2(p-1)}} \geq  
  \frac{\alpha \beta^3(p-1)}{1+\alpha^2\beta^2(p-1)}  
	\end{align*}
i.e.
\begin{align*}
c^* \geq  \frac{\beta^2(p-1)}{4}
  \frac{\alpha^2 \beta^2(p-1)}{1+\alpha^2\beta^2(p-1)}.  
	\end{align*}

From~\eqref{eq3.2-2} we have
\[
\frac{\alpha^2 \beta^2(p-1)}{1+\alpha^2\beta^2(p-1)}=1-\frac{1}{1+\alpha^2\beta^2(p-1)}
 < 1-\frac{1}{1+\frac{1}{2(p-1)}}= \frac{1}{2p-1},
\]
so, it is enough to prove that there is a constant $c^*$ such that
\begin{align*}
c^* \geq \frac{\beta^2(p-1)}{4(2p-1)},
 	\end{align*}	
i.e. by taking	
\begin{align*}
c^* =  \frac{\beta^2(p-1)}{4(2p-1)}
 	\end{align*}	
we complete the proof of the inequality~\eqref{eqI6}  in this case.

\textbf{Case 2.2.} $p> 3$\\
In this case we need to prove that there is a constant $c^*>0$ such that
	\begin{align*}
\left(1+c^*\alpha^2\right)\left(\frac{y}{y+\alpha \beta}\right)^{p-2}
\geq 1-\frac{\alpha\beta(p-2)}{1+\alpha^2\beta^2(p-1)}\frac{1}{y} .
\end{align*}	
	From Bernoulli's inequality
	\begin{align*}
\left(\frac{y}{y+\alpha \beta}\right)^{p-2}
\geq 1-\frac{ \alpha\beta(p-2)}{y+\alpha \beta }. 
\end{align*}
	So, it is enough to prove that 
	\begin{align*}
  \left(1+c^*\alpha^2\right)\left( 1-\frac{\alpha \beta(p-2)}{y+\alpha \beta} \right)
	\geq 1-\frac{\alpha \beta(p-2)}{(1+\alpha^2\beta^2( p-1))y}.
\end{align*}
Simplifying it
\[
c^*y\geq \frac{\beta^2(p-2)(\alpha\beta y(p-1)-1)}{\left(1+\alpha^2\beta^2(p-1)\right)y}
+c^*\alpha\beta(p-3).
\]
Since at the end of the r.h.s
\[
\alpha\beta y(p-1)-1<\alpha\beta y(p-1)\quad \mbox{and}\quad y> \frac{\alpha\beta (p-2)}{1+\alpha^2\beta^2(p-1)}
\]
it is enough to prove that there is a constant $c^*$ such that
\[
\frac{c^*(p-2)}{1+\alpha^2\beta^2(p-1)}\geq \frac{\beta^2(p-1)(p-2)}{1+\alpha^2\beta^2(p-1)}
+c^*(p-3)
\]
or
\[
c^*\left( 1-\alpha^2\beta^2(p-1)(p-3) \right)    \geq \beta^2(p-1)(p-2).
\]
And since $1-\alpha^2\beta^2(p-1)(p-3)>1-\alpha^2\beta^2(p-1)^2>1/2$
 by taking 
\[
c^*=   2\beta^2(p-1)(p-2)
\]
we complete the proof of the inequality~\eqref{eqI6} with a constant 
 \begin{equation*}\label{eqlem32}
  c^*=
  \begin{cases}
         \frac{\beta^2(p-1)}{4(2p-1)}, \quad\mbox{for}\quad 2\le p\le3\\
         2\beta^2(p-1)(p-2), \quad\mbox{for}\quad 3<p< \infty.
        \end{cases}
 \end{equation*}
  Now inequality~\eqref{eqI66} and consequently inequality~\eqref{eq41} are true for any constant 
 $  c\geq\beta^2(p-1)+c^*\left(1+\alpha^2\beta^2(p-1) \right)$. From~\eqref{eq3.2-2} we have 
 \[
 1+\alpha^2\beta^2(p-1)<\frac{2p-1}{2p-2},
 \] 
so it is enough to take $ c=\beta^2(p-1)+c^*(2p-1)/(2p-2)$, i.e. 
 	\begin{equation*}
  c= 
    \begin{cases}
          \beta^2\left(p-\frac{7}{8}\right), \quad\mbox{for}\quad 2\le p\le3\\
         \beta^2 \left(2p^2-4p+1\right), \quad\mbox{for}\quad 3<p< \infty.
        \end{cases}
 \end{equation*}
 The lemma is proved.

\end{proof}

\bibliographystyle{amsplain}

\end{document}